\theoremstyle{plain}
\newtheorem*{theorem*}{Theorem}
\newtheorem{theorem}{Theorem}
\newtheorem{lemma}[theorem]{Lemma}
\newtheorem{proposition}[theorem]{Proposition}
\newtheorem*{proposition*}{Proposition}
\newtheorem{corollary}[theorem]{Corollary}
\newtheorem*{corollary*}{Corollary}
\newtheorem{thm}{Theorem}[section] 
\theoremstyle{plain} 
\newcommand{\thistheoremname}{}
\newtheorem{genericthm}[thm]{\thistheoremname}
\theoremstyle{definition}
\newtheorem{definition}[theorem]{Definition}
\theoremstyle{remark}
\newtheorem{remark}[theorem]{Remark}
\numberwithin{equation}{section}
\numberwithin{theorem}{section}
\renewcommand{\comment}[1]{}
\def\CC{\mathbb{C}}
\def\Y{\mathcal{Y}}
\def\V{\mathcal{V}}
\def\ZZ{\mathbb{Z}}
\DeclareMathOperator\End{End}
\DeclareMathOperator\Hom{Hom}
\def\vac{{\boldsymbol{1}}}  
\def\lieg{{\mathfrak{g}}}
\begin{document}

\title[Associated varieties of modules of quiasi-lisse vertex algebras]{ Associated Varieties of Ordinary Modules over Quasi-Lisse Vertex Algebras}

\author{Juan Villarreal}
\address{Department of Mathematics, 
CU Boulder, 
CO 80309,
United States}
\email{juanjos3villarreal@gmail.com}

\subjclass[2020]{ 17B69; 17B67, 81R10, 81T40}

\begin{abstract}
We prove that if $V$ is a conical simple self-dual quasi-lisse vertex algebra and $M$ is an ordinary module then $\dim X_M=\dim X_V$. Hence, if moreover $X_V$ is irreducible then $X_M=X_V$. 

In particular, this applies to quasi-lisse simple affine vertex algebras $L_{k}(\lieg)$. For admissible $k$ it reproves a result in \cite{A2}, and it further extends it to non-admissible levels.

\end{abstract}

\maketitle


\section{Introduction}

The associated variety of a vertex algebra \cite{Z,A} is a fundamental geometric invariant that captures key features of the representation theory \cite{A2,ADF+,AdV,AnEH,AFK,AJM,AK,BFM, JS, KG, Liu, LS,LS2,Mi, PXZ}; for a survey, see \cite{AM}. The concept has also played a role in physics \cite{BR,BMR,PXY}. 

In this paper we study the relations between associated varieties of modules over a vertex algebra and fusion (intertwining operators) among those modules. For a $V$-module $M$ we write $X_M$ for its associated variety (see §\ref{sec 2.2}). Our first main statement is:

\begin{proposition*} Let $V$ be a conical vertex algebra and let $M_1, M_2, M_3$ be ordinary modules. If there is a surjective intertwiner of type $\binom{M_3}{M_1\quad M_2}$, then 
\[\dim X_{M_3}\leq \text{min}\{ \dim X_{M_1},  \dim X_{M_2}\}+1  \]
\end{proposition*}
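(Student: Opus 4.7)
The plan is to work with Li's filtration on each module, reducing the statement to a support bound for finitely generated modules over the Zhu $C_2$-algebra $R_V = V/C_2(V)$.

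First, recall that for an ordinary module $M$ over the conical vertex algebra $V$, Li's filtration $F_\bullet M$ makes the associated graded $\gr^F M$ into a finitely generated graded $R_V$-module whose support in $\operatorname{Spec} R_V = X_V$ is exactly $X_M$. The conical hypothesis ensures the finite generation from the top weight subspace and gives good control of mode expansions by weight.

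Next I would check that the intertwining operator $Y$ of type $\binom{M_3}{M_1\,M_2}$ respects Li's filtration: for $a \in F_p M_1$ and $m \in F_q M_2$, the modes $a_{(n)}m$ land in $F_{p+q-n-1}M_3$ (after suitable normalization), while the mode index $n$ appearing in $Y(a,z)m$ with a fixed output weight is bounded above by a weight relation. Passing to associated gradeds and pairing with a finite $R_V$-generating set $\{e_j\}$ of $\gr^F M_2$ (finite by the conical hypothesis) yields $R_V$-module maps $\gr^F M_1 \to \gr^F M_3 \otimes \mathbb{C}[z, z^{-1}]$, where $z$ parameterizes the mode index.

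Now I exploit the surjectivity of $Y$. Every element of $\gr^F M_3$ lifts to $M_3$, where by surjectivity it equals a finite sum $\sum_i a^i_{(n_i)} m^i$, and reducing modulo deeper filtration pieces rewrites it as a combination of top modes of elements of $\gr^F M_1$ acting on the $\{e_j\}$. Collecting mode indices into a polynomial variable $z$, this provides a surjective $R_V$-module map
\[ (\gr^F M_1) \otimes_{\mathbb{C}} \mathbb{C}[z] \longrightarrow \gr^F M_3 \]
(up to a finite direct sum over $j$, which does not affect the support). Taking supports over $R_V$ then gives $X_{M_3} \subseteq X_{M_1} \times \mathbb{A}^1$, so $\dim X_{M_3} \leq \dim X_{M_1} + 1$. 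The symmetric bound with $M_2$ follows by swapping the roles of $M_1$ and $M_2$ in the construction, or by invoking skew-symmetry of intertwining operators.

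The main obstacle I expect lies in the last step, namely isolating the ``$+1$'': one must show that only a \emph{one}-parameter family of modes is needed to generate $\gr^F M_3$ from $\gr^F M_1$ and $\gr^F M_2$, not a higher-dimensional family. Precisely here the conical structure and the ordinary module hypothesis enter crucially, since together they bound the mode index $n$ in terms of the weights of $a$, $m$, and the output, so that the mode parameter truly contributes a single $\mathbb{A}^1$'s worth of dimension rather than something larger.
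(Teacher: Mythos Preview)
Your overall intuition is right—the ``$+1$'' should come from the one-parameter family of mode indices—but there is a genuine gap at the step where you assert that the induced maps $\gr^F M_1 \to \gr^F M_3 \otimes \mathbb{C}[z,z^{-1}]$ are $R_V$-linear. From the Borcherds identity one has
\[
(v_{(-1)}a)_{(k)}b \;=\; v_{(-1)}\,a_{(k)}b \;+\; \sum_{j\ge 1} v_{(-1-j)}\,a_{(k+j)}b \;+\; \sum_{j\ge 0} a_{(k-1-j)}\,v_{(j)}b,
\]
so multiplying $a$ by $\bar v\in R_V$ and then applying a fixed mode does \emph{not} agree (even in the associated graded) with applying that mode and then multiplying by $\bar v$: the correction terms mix different mode indices and, through $v_{(j)}b$, move you off the chosen generator $e_j\in\gr^F M_2$. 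Thus the map you write down is only $\mathbb{C}$-linear, not $R_V$-linear. A related confusion appears in the next line: even granting an $R_V$-module surjection $(\gr^F M_1)\otimes_{\mathbb{C}}\mathbb{C}[z]\twoheadrightarrow \gr^F M_3$, the $R_V$-support of the source is just $X_{M_1}$ (an infinite direct sum of copies has the same annihilator), not $X_{M_1}\times\mathbb{A}^1$; so the inclusion you state does not follow from the premise, and the place where the ``$+1$'' enters is not where you put it.

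The paper's proof sidesteps $R_V$-linearity altogether. Its key lemma is a purely $\mathbb{C}$-linear spanning statement, proved by induction on conformal weight using exactly the two identities above: modulo $F^1M_3$, every element of $M_3$ lies in the span of $a_{(k-r)}m$ with $a\in M_1^{\text{top}}$, $m\in M_2/F^1M_2$, and $r\ge 0$. The decisive point is the reduction to the \emph{finite-dimensional} top space $M_1^{\text{top}}$, not to all of $\gr^F M_1$. This yields a coefficient-wise inequality of Hilbert series,
\[
H_{\bar M_3}(t)\;\le\;(\dim M_1^{\text{top}})\cdot\frac{1}{1-t}\,H_{\bar M_2}(t),
\]
and now the ``$+1$'' is visible as the extra pole of $\tfrac{1}{1-t}$ at $t=1$, since by Hilbert--Serre the pole order of $H_{\bar M}(t)$ at $t=1$ equals $\dim X_M$. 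The bound with $M_1$ and $M_2$ interchanged comes from the companion spanning statement (using $M_2^{\text{top}}$ and $\bar M_1$), which is proved the same way; it does not rely on skew-symmetry of intertwiners.
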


See Proposition~\ref{pro3.3}, the proof of this proposition uses the theory of Hilbert generating functions in commutative algebra.

Then, we obtain a dimension statement for quasi-lisse algebras:

\begin{theorem*} Let $V$ be a conical self-dual simple quasi-lisse vertex algebra and $M$ a simple ordinary module
\begin{itemize}
\item[i] If V is self-dual then $\dim X_{M}=\dim X_V$.
\item[ii] If V is self-dual and $X_V$ is irreducible then $X_V=X_M$.
\end{itemize}
\end{theorem*}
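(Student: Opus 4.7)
My plan is to prove (i); part (ii) then follows at once, since $X_M$ is a closed subvariety of $X_V$ and a closed subvariety of the same dimension of an irreducible variety equals the ambient variety. One direction of (i) is immediate: $R_M := M/C_2(M)$ is a module over the Poisson algebra $R_V := V/C_2(V)$, so $X_M \subseteq X_V$ and $\dim X_M \leq \dim X_V$.

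For the reverse inequality $\dim X_V \leq \dim X_M$ I would combine self-duality with Proposition~\ref{pro3.3}. Let $M'$ denote the contragredient of $M$, which is again simple and ordinary and satisfies $\dim X_{M'} = \dim X_M$. The canonical evaluation pairing $M \otimes M' \to \CC$ organizes, via the standard matrix-coefficient construction, into a nonzero intertwiner of type $\binom{V'}{M\;M'}$; under the self-duality isomorphism $V' \cong V$ this becomes a nonzero intertwiner of type $\binom{V}{M\;M'}$, and simplicity of $V$ forces its total image to be all of $V$. Applying Proposition~\ref{pro3.3} then gives
\[
\dim X_V \;\leq\; \min\{\dim X_M,\dim X_{M'}\}+1 \;=\; \dim X_M+1,
\]
so that $\dim X_V - 1 \leq \dim X_M \leq \dim X_V$.

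To close the remaining off-by-one gap I would invoke the quasi-lisse hypothesis through a parity argument. Since $X_V$ has only finitely many symplectic leaves and each such leaf is even-dimensional, the top-dimensional irreducible components of $X_V$ are closures of top symplectic leaves, so $\dim X_V$ is even. On the other hand, $R_M$ is a Poisson module over $R_V$; a short check with the Poisson-module axioms shows $\mathrm{Ann}_{R_V}(R_M)$ is a Poisson ideal, so $X_M$ is a Poisson-invariant closed subvariety of $X_V$, hence a union of closures of symplectic leaves, and therefore $\dim X_M$ is also even. The common parity is incompatible with a gap of $1$, and so $\dim X_V = \dim X_M$.

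I expect the main obstacle to lie in the structural inputs rather than the parity bookkeeping: first, justifying the existence and surjectivity of the canonical intertwiner of type $\binom{V}{M\;M'}$ from self-duality alone (without invoking rationality); and second, verifying in the quasi-lisse generality that every closed Poisson-invariant subvariety of $X_V$ is indeed a union of closures of symplectic leaves, so that the parity conclusion applies to $X_M$.
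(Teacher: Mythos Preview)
Your proposal is correct and follows essentially the same route as the paper: the paper's Proposition~\ref{pro3.4} obtains the nonzero (hence surjective, by simplicity of $V$) intertwiner of type $\binom{V}{M\;M'}$ via the standard symmetry $\binom{M}{V\;M}\cong\binom{V}{M\;M'}$ and then applies Proposition~\ref{pro3.3} to get $\dim X_V\le\dim X_M+1$. The quasi-lisse parity step is also the same---the paper records that $\mathrm{Ann}_{R_V}\bar M$ is a Poisson ideal so that $X_M$ is a union of symplectic-leaf closures and hence even-dimensional, eliminating the off-by-one possibility exactly as you do.
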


Details are given in Theorem~\ref{thm3.5}. In particular, the theorem applies to quasi-lisse simple affine vertex algebras and affine $W$-algebras. We remark that if $V$ is quasi-lisse then it has finite simple ordinary modules, see \cite{AK, ES}. Furthermore, it has been conjectured in \cite{AM2} that the associated variety $X_V$ of a simple conical quasi-lisse vertex algebra is irreducible. Hence, assuming this conjecture, the theorem  implies that for such vertex algebras, all ordinary modules have the same associated variety. See also \cite{L, DF}.

For simple affine vertex algebras $L_{k}(\mathfrak{g})$ at $k$ admissible level \cite{KW}, using a different method, it was previously established in \cite{A2} that $X_{V}$ is a quasi-lisse irreducible variety (explicitly determined) and $X_{M}=X_{V}$ for ordinary simple modules. Beyond admissible levels we have for example that for $L_{-2}(G_2), L_{-2}(B_3)$ the associate varieties are quasi-lisse irreducible, with non-trivial (i.e. $M\neq L_{k}(\mathfrak{g})$) simple ordinary modules \cite{ADF+}[Theorem B, Theorem C].


\section{Preliminars} \label{pre}

Let $V$ be a complex vector space. We denote by $V [\![z]\!]$  the space of formal power series with coefficients in $V$, and by  $V(\!(z)\!):=V [\![z]\!][z^{-1}]$ the formal Laurent series. More generally, $V[\![z]\!]z^{-\CC}$ denotes the space of sums $\sum_{n}f_{n}z^{n}$, where $f_n\in V$ and $n\in \{d_{i} + \ZZ_{\geq0}\}$ for finitely many $d_i \in  \CC$. Note that $V [\![z]\!]z^{-\CC}$ has the usual action of the derivative $\partial_{z}$.

Now, we review some standard definitions, we follow \cite{AM, K, FB}.

\begin{definition} \label{d3.11}
A vertex algebra is a vector space $V$ equipped with a vector $\vac\in V$  and a linear map 
\[Y\colon  V\to \Hom(V,V(\!(z)\!))\,, \qquad v\mapsto Y(v,z)=\sum_{n\in \ZZ}v_{(n)}z^{-n-1}\,, \]
subject to the following axioms:

\medskip
(\emph{i})\; $Y(\vac,z)=I_{V}$, \; $Y(v,z)\vac\in V[\![z]\!]$, \; $Y(v,z)\vac\big|_{z=0} = v$. 


(\emph{ii})\;  $\forall$ $u,v\in V$ and $n\in \ZZ$ 
\begin{equation}\label{b1}
\begin{split}
\iota_{z_1, z_2}z_{12}^{n}Y(v&,z_1)Y(u, z_2)-\iota_{z_2, z_1}z_{12}^{n}Y(u, z_2)Y(v,z_1)b\\
&=\sum_{i\geq 0}Y( v_{(n+i)}u_j, z_2) \partial^{(i)}_{z_2}\delta(z_1, z_2)\, .
\end{split}
\end{equation}
\end{definition}

On a vertex algebra we define $T\in \End(V)$ by $Tv=v_{(-2)}\vac$, then by definition we have $[T,Y(v,z)] = \partial_z Y(v,z)$. A \emph{Hamiltonian} of a vertex algebra is a locally finite semisimple operator $H$ on $V$ satisfying 
\begin{equation}\label{hami}[H, v_{(n)}]=-(n+1)v_{(n)}+(Hv)_{(n)}.\end{equation}
 A vertex algebra with a Hamiltonian is called graded, we denote $V_{\Delta}:=\{v\in V | Hv=\Delta v\}$ for $\Delta\in \CC$. A vertex algebra is positive graded if $V=\bigoplus_{\Delta\in\frac{1}{r_0}\ZZ_{\geq 0}} V_{\Delta}$ where $r_0$ is a positive integer. A positive graded vertex algebra is called \emph{conical} if $V=\bigoplus_{\Delta\in\frac{1}{r_0}\ZZ_{\geq 0}} V_{\Delta}$  and $V_0=\CC$.

\begin{definition} \label{d3.11}
A $V$-module is a vector space $M$ equipped with a linear map 
\[Y^{M}\colon  V\to \Hom(M,M(\!(z)\!))\,, \qquad v\mapsto Y^{M}(v,z)=\sum_{n\in \ZZ}v^{M}_{(n)}z^{-n-1}\,, \]
subject to the following axioms:

\medskip
(\emph{i})\; $Y^{M}(\vac,z)=I_{M}$. 


(\emph{ii})\;  $\forall$ $u,v\in V$ and $n\in \ZZ$
\begin{equation}\label{b1}
\begin{split}
\iota_{z_1, z_2}z_{12}^{n}Y^M(v&,z_1)Y^M(u, z_2)-\iota_{z_2, z_1}z_{12}^{n}Y^M(u, z_2)Y^M(v,z_1)\\
&=\sum_{i\geq 0}Y^M( v_{(n+i)}u, z_2) \partial^{(i)}_{z_2}\delta(z_1, z_2)\, .
\end{split}
\end{equation}
\end{definition}

By definition it follows that $Y^M(Tv, z)= \partial_z Y^M(v,z)$. 
A $V$-module $M$ is graded if $V$ is graded and there is a semisimple operator $H$ on $M$  such that \eqref{hami} is satisfied on $M$. Let $M_{\Delta}:=\{m\in M | Hm=\Delta m\}$. 

\begin{definition}We say that $M$ is a positively graded if there is a $h\in \CC$ such that $M=\bigoplus_{\Delta\in \{h+\frac{1}{r_0}\ZZ_{\geq 0},0\}}M_{\Delta}$. We use the notation $M^{{top}}=M_h$, the top component of $M$. A positive energy representation $M$ is called an ordinary representation if each $M_{\Delta}$
is finite-dimensional.  \end{definition}

Additionally, we have the definition of intertwiner from \cite{FHL}

\begin{definition}\label{defint} Let $M_1 , M_2, M_3$ three $V$-modules. 
 An intertwining of type $\binom{M_3}{M_1\quad M_2}$ is a linear map 
\[\Y:M_1 \rightarrow \Hom(M_2,M_3[\![z]\!]z^{-\CC})\,, \qquad a\mapsto \Y(a,z)=\sum_{n\in \CC}a_{(n)} \/\, z^{-n-1}\, , \]
subject to the following axioms:

\medskip

(\emph{i})\; 
$\Y(Ta,z)  = \partial_z \Y(a,z)$.

(\emph{ii})\;  $\forall$ $v\in V, a\in M_1$ and $n\in \ZZ$
\begin{equation}\label{b}
\begin{split}
\iota_{z_1, z_2}z_{12}^{n}Y(v&,z_1)\Y(a, z_2)-\iota_{z_2, z_1}z_{12}^{n}\Y(a, z_2)Y(v,z_1)\\
&=\sum_{i\geq 0}\Y( v_{(n+i)}a, z_2)\partial^{(i)}_{z_2}\delta(z_1, z_2)\, .
\end{split}
\end{equation}
\end{definition}
We say that the intertwiner is graded if $M_1, M_2, M_3$ are graded and 
\begin{equation}\label{graint} [H, a_{(k)}]=-(k+1)a_{(k)}+(Ha)_{(k)}\end{equation}

 Note that a $V$-module $M$ defines an intertwiner of type $\binom{M}{V \quad  M}$. 

\begin{remark} We note that the identity \eqref{b} is equivalent to ($n,m\in \ZZ$, $k\in \CC$) the following identity 
\begin{equation}\label{borcherds2}
\begin{split}
\sum_{j\in \ZZ_+} &(-1)^{j}\binom{n}{j}\left(v_{(m+n-j)}a_{(k+j)} -(-1)^{n}a_{(n+k-j)} v_{(m+j)}\right) \\
& =\sum_{j\in \ZZ_+} \binom{m}{j}( v_{(n+j)}a )_{(m+k-j)} \,.
\end{split}
\end{equation}

\end{remark}

  \subsection{Poisson algebras and Poisson modules} \label{sec 2.2}
  
   \begin{definition} Let $R$ be a Poisson algebra. A Poisson module is an $R$-module $M$ in the usual associative sense equipped with a bilinear map 
 \[\{\cdot,\cdot \}\colon  R\otimes M\rightarrow M\,, \]
subject to the following axioms: For all $r,s\in \V$, $m\in M$  

\medskip
(\emph{i})\;  $\{r ,\{ s, m\}=\{s, \{r, m\}\}+\{ \{r,s\},m\}$.
\medskip

(\emph{ii})\;  $\{r, sm\}=\{r,s\}m+s\{r,m\}$.
\medskip

(\emph{iii})\;  $\{rs, m\}=r\{s,m\}+s\{r,m\}$.
 \end{definition}

For a vertex algebra $V$ and a module $M$, we define the subspaces $F^{1}V=V_{(-2)}V$, $F^{1}M=V_{(-2)}M$. 

  \begin{theorem}\label{thm2.9} $R_V:=V/F^1V$ has a natural poisson algebra structure and 
   $\bar{ M}:=M/F^{1}M$ 
 forms a  Poisson $R_{V}$-module.
 \end{theorem}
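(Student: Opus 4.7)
The plan is to set, on the quotients,
\[
\bar u \cdot \bar v := \overline{u_{(-1)}v}, \quad \{\bar u, \bar v\} := \overline{u_{(0)}v}, \quad \bar v \cdot \bar m := \overline{v^M_{(-1)}m}, \quad \{\bar v, \bar m\} := \overline{v^M_{(0)}m},
\]
and to derive each Poisson axiom from the Borcherds identity \eqref{borcherds2}, its module analog, and the skew-symmetry formula $u_{(n)}v = -\sum_{k\geq 0}\frac{(-1)^{n+k}}{k!}T^k(v_{(n+k)}u)$.

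The key lemma, which is also the main technical obstacle, is that $F^1V$ and $F^1M$ absorb every mode $\leq -2$ appearing below. I would first show by induction on $-n$ that $V_{(n)}V \subseteq F^1V$ and $V_{(n)}M \subseteq F^1M$ for all $n \leq -2$: the case $n=-2$ is the definition, and the step follows from $(Tu)_{(k)} = -k\,u_{(k-1)}$, which yields $u_{(-m-1)}v = \frac{1}{m}(Tu)_{(-m)}v$. The commutator formula $[u_{(p)}, v_{(q)}] = \sum_{j\geq 0}\binom{p}{j}(u_{(j)}v)_{(p+q-j)}$ (the $n=0$ case of \eqref{borcherds2}) then gives $u_{(p)}(F^1V) \subseteq F^1V$ for $p \in \{-1, 0\}$: at $p=0$ the bracket equals the single term $(u_{(0)}a)_{(-2)}$, and at $p=-1$ each summand of $\sum_{j\geq 0}(-1)^j (u_{(j)}a)_{(-3-j)}$ sends $V$ into $V_{(\leq -2)}V$. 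Skew-symmetry upgrades this to two-sided stability, since $u_{(p)}v - (-1)^{p+1}v_{(p)}u \in TV \subseteq F^1V$ for $p \in \{-1,0\}$. The module analogs follow the same pattern; the remaining right-stability $(F^1V)^M_{(-1)}M \subseteq F^1M$ comes from the module iterate formula
\[
(u_{(-2)}v)^M_{(-1)} = \sum_{j\geq 0}(j+1)\bigl[u^M_{(-2-j)}v^M_{(-1+j)} - v^M_{(-3-j)}u^M_{(j)}\bigr],
\]
each summand of which sends $M$ into $V_{(\leq -2)}M \subseteq F^1M$.

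Once well-definedness is in hand, the remaining axioms are short consequences of single Borcherds specializations. For $R_V$, commutativity of $\cdot$ and antisymmetry of $\{,\}$ are the skew-symmetry relations just noted; associativity $(v_{(-1)}a)_{(-1)}w \equiv v_{(-1)}a_{(-1)}w \pmod{F^1V}$ follows from \eqref{borcherds2} at $(m,n,k) = (0,-1,-1)$ after absorbing the $V_{(\leq -2)}V$ tail; the Leibniz rule is the exact commutator identity $[u_{(0)}, v_{(-1)}] = (u_{(0)}v)_{(-1)}$; and Jacobi is the single-term $[u_{(0)}, v_{(0)}] = (u_{(0)}v)_{(0)}$. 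The Poisson $R_V$-module axioms for $\bar M$ are the same statements with $Y^M$ in place of $Y$: axiom (i) is $[u^M_{(0)}, v^M_{(0)}] = (u_{(0)}v)^M_{(0)}$, axiom (ii) is $[u^M_{(0)}, v^M_{(-1)}] = (u_{(0)}v)^M_{(-1)}$, and axiom (iii) is the module iterate formula for $(u_{(-1)}v)^M_{(0)}$ whose $j \geq 1$ tail vanishes modulo $F^1M$.

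The principal difficulty is concentrated in the well-definedness step: once one knows that $F^1V$ and $F^1M$ absorb the $V_{(\leq -2)}$-modes arising in every specialization above, each Poisson axiom reduces to one line of algebra.
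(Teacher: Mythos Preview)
Your argument is correct and follows the standard route via specializations of the Borcherds identity and skew-symmetry. In fact you supply considerably more than the paper does: the paper states Theorem~\ref{thm2.9} as a known result (Zhu's $C_2$-algebra construction, cf.\ \cite{Z,A}) and only records the defining formulas for the product and bracket on $R_V$ and $\bar M$, without writing out any verification of well-definedness or of the Poisson axioms.

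One small omission in your write-up: when you pass to the module case you note that skew-symmetry is unavailable and invoke the iterate formula for $(u_{(-2)}v)^M_{(-1)}$, but the bracket $\{\bar v,\bar m\}=\overline{v^M_{(0)}m}$ also requires $(F^1V)^M_{(0)}M\subseteq F^1M$, which likewise cannot be obtained from skew-symmetry. The same iterate computation at $k=0$ gives
\[
(u_{(-2)}v)^M_{(0)}=\sum_{j\geq 0}(j+1)\bigl[u^M_{(-2-j)}v^M_{(j)}-v^M_{(-2-j)}u^M_{(j)}\bigr],
\]
each term landing in $V_{(\leq -2)}M\subseteq F^1M$, so the gap closes immediately.
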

 
The Poisson algebra $R_V$ is the {Zhu's $C_2$-algebra} \cite{Z}.  $R_V$ is defined as follows: For $u, v\in V$ 
 \begin{align*}
 \bar{u}\cdot \bar{v}:=\overline{u_{(-1)}v}\, , \qquad \{\bar{u}, \bar{v}\}=\overline{u_{(0)}v}\, ,\, 
 \end{align*}
 where $\bar{u}:=\sigma_0(u)$.  And the Poisson module  $\bar{ M}$ is defined as follows: For $u\in V, m\in M$
  \begin{align*}
 \bar{u}\cdot \bar{m}:=\overline{u_{(-1)}m} ,\qquad \{\bar{u}, \bar{m}\}=\overline{u_{(0)}m} ,\, 
 \end{align*}
  where $\bar{m}:=\sigma_0(m)$.

 
A vertex algebra $V$ is called \emph{finitely strongly generated} if $R_V$ is finitely generated as a ring. A $V$-module $M$ is called \emph{finitely strongly generated} if $\overline{M}$ is finitely generated as a $R_V$-module. 


In \cite{A}, the associated variety of a vertex algebra $V$ and a module $M$ were defined as follows
 \begin{align}
 &X_{V}:=\text{specm}(R_V)\, , \\
&X_{M}:=\text{supp}_{R_V}(\bar{M})\subset X_{V}\, .
\end{align}
Where $\text{suppm}$ means the maximal ideals in $\text{supp}$.

\section{Associated varieties and Intertwiners}

In this section, we assume all intertwiners are graded \eqref{graint}. The identity \eqref{borcherds2} gives us the following two identities
\begin{align}
&[v_{(m)},a_{(k)}]b  =\sum_{j\in \ZZ_+} \binom{m}{j}( v_{(j)}a )_{(m+k-j)} b\, , \label{1}\\
&( v_{(-1)}a )_{(k)}b=\sum_{j\in \ZZ_{\geq 0}} \left(v_{(-1-j)}a_{(k+j)}b +a_{(-1+k-j)} v_{(j)}b\right)\label{2}  \,.
\end{align}

\begin{lemma}\label{lem3.1} Let $V$ be a conical vertex algebra and let $M_1, M_2, M_3$ be positive energy modules. For a surjective intertwiner of type $\binom{M_3}{M_1\quad M_2}$, we have as vector spaces 
\begin{itemize}
\item[i)] $M_3/F^{1}M_3\subset \left(\bigoplus_{r\geq 0}{M_1^{\text{top}}}_{(k-r)}(M_2/F^{1}M_2)\right)/F^{1}M_3$,
\item[ii)] $M_3/F^{1}M_3\subset \left(\bigoplus_{r\geq 0}({M_1/F^{1}M_1})_{(k-r)}M_2^{\text{top}}\right)/F^{1}M_3$,

\end{itemize}
where $k=h_1+h_2-h_3-1$.
\end{lemma}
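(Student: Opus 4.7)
The plan is to combine the surjectivity of $\mathcal{Y}$ with the Borcherds-type identity~\eqref{2}, which modulo $F^1 M_3$ lets us trade modes of higher-weight elements of $M_1$ for modes of lower-weight ones.

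By surjectivity of $\mathcal{Y}$, any element of $M_3$ is a finite sum $\sum a^i_{(n_i)} b^i$ with homogeneous $a^i \in M_1$, $b^i \in M_2$ and $n_i \in \CC$. The graded intertwiner condition~\eqref{graint} forces $n_i = \Delta_1^i + \Delta_2^i - \Delta_3^i - 1$, where $\Delta_3^i \geq h_3$ is the weight of the output; so it suffices to show each homogeneous $a_{(n)}b$ lies in $\sum_{r\geq 0} M_1^{\text{top}}{}_{(k-r)} M_2 + F^1 M_3$. A preliminary observation is that $v_{(-i)} M \subseteq F^1 M$ for all $i \geq 2$ and any $V$-module $M$, which one proves by induction on $i$ using $(Tv)_{(n)} = -n v_{(n-1)}$ together with $Tv \in V$. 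This collapses identity~\eqref{2} modulo $F^1 M_3$ to
\[(v_{(-1)} a)_{(n)} b \equiv v_{(-1)}\bigl(a_{(n)}b\bigr) + \sum_{j\geq 0} a_{(-1+n-j)} v_{(j)} b \pmod{F^1 M_3}.\]

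The main step is an induction on the conformal weight $\Delta_1$ of $a$. In the base case $a \in M_1^{\text{top}}$ one has $n = k - r$ with $r = (\Delta_3 - h_3) - (\Delta_2 - h_2)$. When $r \geq 0$, the element is already of the desired form. When $r < 0$, i.e., $b$ has too high weight, a secondary induction on $\Delta_2 - h_2$ driven by identity~\eqref{1} converts excess weight in $b$ into mode index while keeping $a \in M_1^{\text{top}}$. For the inductive step $\Delta_1 > h_1$, I express $a$ via $V$-module operations from lower-weight elements of $M_1$ and apply the boxed reduction: both $a'_{(n)} b$ and $a'_{(-1+n-j)} v_{(j)} b$ are covered by the inductive hypothesis.

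The main obstacle is handling the $R_V$-action term $v_{(-1)}\bigl(a'_{(n)} b\bigr)$ that appears on the right of the reduced form of~\eqref{2}: to keep the target subspace closed, one must show that the image of $\sum_{r\geq 0} M_1^{\text{top}}{}_{(k-r)} M_2$ in $\bar M_3$ is stable under the $R_V$-action. This is handled by strengthening the inductive hypothesis to simultaneously control $R_V$-multiples of the intertwiner images, iterating~\eqref{2} to absorb each $R_V$-action back into the target subspace modulo $F^1 M_3$. Part~(ii) follows by the same argument after swapping the roles of $M_1$ and $M_2$ via the skew-symmetry of graded intertwining operators, reducing to $M_2^{\text{top}}$ modes in place of $M_1^{\text{top}}$.
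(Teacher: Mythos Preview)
Your approach shares the paper's core tools---identities \eqref{1} and \eqref{2}---but organises the induction differently: you descend on the weight $\Delta_1$ of $a\in M_1$, whereas the paper starts from $(M_1^{\text{top}})_{(k)}M_2^{\text{top}}=M_3^{\text{top}}$ and builds outward by iteratively applying $v_{(-1)}$, inducting instead on the weight $\Delta_v$ of the element of $V$ being applied. In the paper's route the commutator \eqref{1} sends $v_{(-1)}a_{(k-r)}b$ to $a_{(k-r)}v_{(-1)}b$ (already in the target) plus terms $(v_{(j)}a)_{(k-r-j-1)}b$; these are rewritten as $\sum u_{(-1)}a_i$ with $a_i\in M_1^{\text{top}}$ and fed into \eqref{2}, and the residual $u_{(-1)}(\cdot)$ has $\Delta_u<\Delta_v$, which closes the loop. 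Thus the paper never leaves $M_1^{\text{top}}$ on the $M_1$-side, and what you call the $R_V$-closure problem is exactly what this descent on $\Delta_v$ settles. Step~2 then separately replaces $M_2$ by $M_2/F^1M_2$.

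Your base case has a genuine gap. You claim that for $a\in M_1^{\text{top}}$ with $n>k$, identity~\eqref{1} lets you lower the weight of $b$ ``while keeping $a\in M_1^{\text{top}}$''. But writing $b=v_{(m)}b'$ and expanding with \eqref{1} produces terms $(v_{(j)}a)_{(m+n-j)}b'$, and $v_{(j)}a$ has weight $h_1+\Delta_v-j-1>h_1$ whenever $0\le j<\Delta_v-1$. So the secondary induction does \emph{not} remain in the base case; it throws you back into the main induction at a stage you have not yet established, and the argument is circular as written. The fix is to merge the two inductions---either induct on the output weight $\Delta_3-h_3$, or follow the paper and induct on $\Delta_v$ after first reaching the target once via $M_3^{\text{top}}$. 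Your ``strengthened inductive hypothesis'' for $R_V$-stability is the right diagnosis, but you need to make the induction parameter explicit so that each $v_{(-1)}(\cdot)$ term it produces carries a strictly smaller invariant; the paper's $\Delta_v$ is precisely such a parameter. Your use of skew-symmetry of intertwiners for part~(ii) is a legitimate shortcut (the paper just says ``analogous''), provided you observe that the transposed intertwiner is again surjective.
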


\begin{proof} It follows from \eqref{graint} that for $k=h_1+h_2-h_3-1$ we have a surjective linear map
\[{M^{\text{top}}_1}\otimes_{\CC}{M^{\text{top}}_2}\rightarrow {{M^{\text{top}}_1}}_{(k)}{M^{\text{top}}_2}\cong {M^{\text{top}}_3}\, .\]

We prove i) in two steps. 

Step 1: $M_3/F^1M_3\subset \left(\bigoplus_{r\geq 0} { M_1^{\text{top}}}_{(k-r)}M_2\right)/F^{1}M$. For an homogeneous vector $v\in V$, $a\in  M_1^{\text{top}}, b\in M_{2}^{\text{top}}$, we have from \eqref{1} that
\begin{align*} v_{(-1)}a_{(k)}b=a_{(k)}v_{(-1)}b+\sum\binom{-1}{j} (v_{(j)}a)_{(k-j-1)}b\, .
\end{align*}
The first term on the right-hand side belongs to ${ M_1^{\text{top}}}_{(k)}M_2$. For the second term, we have a finite sum $v_{(j)}a=\sum {v^{i}_j}_{(-1)}a_i$ for $v^{i}_j\in V, a_{i}\in  M_1^{\text{top}}$, then from \eqref{2}
\begin{align*}
& (\sum {v^{i}_j}_{(-1)}a_i)_{(k-j-1)}b=\\\
&\qquad \sum {v^{i}_j}_{(-1)} (a_i)_{(k-j-1)}b+\sum_{r\in {\ZZ}_{\geq 0}}(a_i)_{(k-j-2-r)}{v^{i}_j}_{(r)}b, \quad \text{mod }F^1M_3\, .
\end{align*}
The second term on the right hand side is in ${\bigoplus_{r\geq 0} M_1^{\text{top}}}_{(k-2-r)}M_2$, and the first terms satisfy $\Delta_{v^i_j}<\Delta_v$. Hence, because $V$ is conical if we repeat the previous two-steps a finite number of times on the first term we obtain $ (v_{(j)}a)_{(k-j-1)}b\in \bigoplus_{r\geq 0}{ M_1^{\text{top}}}_{(k-1-r)}M_2$ mod $F^1M_3$.

Step 2: If $m\in F^1M_2$ homogeneous of weight $\Delta_m$ then from \eqref{1} and \eqref{2}
\[a_{(k)}m=a_{(k)}v_{(-2)}m'=-\sum_{j\in \ZZ_+} \binom{-2}{j}( v_{(j)}a )_{(k-2-j)} m'\]
\[=\sum {v^{i}_j}_{(-1)} (a_i)_{(k-j-2)}b+\sum_{l\in {\ZZ}_{\geq 0}}(a_i)_{(k-j-3-l)}{v^{i}_j}_{(l)}b, \quad \text{mod }F^1M_3\, .\]
The second term on the right hand side belongs to $\bigoplus_{r\geq 0}{ M_1^{\text{top}}}_{(k-3-r)}M_2$ and $\Delta_{{v^{i}_j}_{(l)}b}<\Delta_{m}$. The first term on the right hand side belongs to $\bigoplus_{r\geq 0}{ M_1^{\text{top}}}_{(k-2-r)}M_2$ by step 1, hence we have a finite sum ${v^{i}_{j}}_{(-1)} (a_i)_{(k-j-2)}b=\sum_r \sum_s{a^r_s}_{(k-2-r)}m^r_s$ for $a^r_s\in M^{\text{top}}_1$, $m^r_s\in M_2$ and $\Delta_{m^r_s}<\Delta_{m}$. Because $M_2$ is a positive energy module if we repeat the previous steps a finite number of times we obtain i).

The proof of ii) is analogous.
\end{proof}
If $V$ is graded then $HF^1V\subset F^{1}V$.  Hence if $V$ is {conical} then  	
\begin{equation}\label{conical}R_{V}=\bigoplus_{\Delta\in\frac{1}{r_0}\ZZ_{\geq 0}}{(R_V)}_{\Delta}\, , \qquad {(R_V)}_{0}=\CC\, .\end{equation}
Analogously, if $M$ is positively graded $HF^1M\subset F^1M$, then 
 \[\bar{M}=\bigoplus_{\Delta\in\frac{1}{r_0}\ZZ_{\geq 0}}{(\bar{M})}_{\Delta+h}\]
We have the following Corollary.
\begin{corollary}\label{cor3.2} Let $V$ be a conical vertex algebra and let $M_1, M_2, M_3$ be ordinary modules. For a surjective intertwiner of type $\binom{M_3}{M_1\quad M_2}$, we have for $m\in \frac{1}{r_0}\ZZ_{\geq 0}$ 
\begin{itemize}
\item[i)] $\dim \bar{M}^3_{h_3+m}\leq (\dim  M_1^{\text{top}})(\dim \bar{M}^2_{h_2+m-\lfloor m\rfloor}+\cdots +\dim \bar{M}^2_{h_2+m})\, ,  $\\
\item[ii)] $\dim \bar{M}^3_{h_3+m}\leq (\dim \bar{M}^1_{h_1+m-\lfloor m\rfloor}+\cdots +\dim \bar{M}^1_{h_1+m})(\dim  M_2^{\text{top}})\, .  $
\end{itemize}
\end{corollary}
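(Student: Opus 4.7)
The plan is to derive the corollary directly from Lemma~\ref{lem3.1} by bookkeeping the $H$-weight in the inclusion
\[\bar{M}_3 \subset \Big(\bigoplus_{r\geq 0} M_1^{\text{top}}{}_{(k-r)}\bar{M}_2\Big)/F^1M_3.\]
For $a\in M_1^{\text{top}}$ and $\bar m\in \bar{M}_2$ homogeneous of weight $h_2+l$, the graded-intertwiner identity \eqref{graint} combined with $k=h_1+h_2-h_3-1$ shows that $a_{(k-r)}\bar m$ has weight $h_1+(h_2+l)-(k-r)-1 = h_3+l+r$, i.e.\ the bilinear ``action'' raises $\bar{M}_2$-weight by $l\mapsto l+r$ up to the shift between $h_2$ and $h_3$.

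Restricting to the weight $h_3+m$ piece of $\bar{M}_3$, the contributions come exactly from pairs $(l,r)$ with $l+r=m$, $r\in \ZZ_{\geq 0}$, and $l\in \frac{1}{r_0}\ZZ_{\geq 0}$. Since the index shifts produced by \eqref{1}--\eqref{2} in the proof of Lemma~\ref{lem3.1} are integer shifts, $r$ is always a non-negative integer; hence $l$ and $m$ have the same fractional part, and the admissible values of $l$ are $m-\lfloor m\rfloor, m-\lfloor m\rfloor+1, \ldots, m$, exactly $\lfloor m\rfloor+1$ values. For each such $l$ the bilinear map
\[M_1^{\text{top}}\otimes \bar{M}^2_{h_2+l}\to \bar{M}^3_{h_3+m},\qquad (a,\bar m)\mapsto \overline{a_{(k-r)}m},\]
has image of dimension at most $(\dim M_1^{\text{top}})(\dim \bar{M}^2_{h_2+l})$, and summing over the admissible $l$ yields inequality (i). Part (ii) is entirely parallel, using instead the second inclusion of Lemma~\ref{lem3.1} with operators drawn from $\bar{M}_1$ acting on the fixed top component $M_2^{\text{top}}$.

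The main point that requires care is the fractional-part argument: one must confirm that the index $r$ in the sum of Lemma~\ref{lem3.1} genuinely ranges over non-negative \emph{integers} (not arbitrary non-negative elements of $\frac{1}{r_0}\ZZ_{\geq 0}$), which is what produces the precise count $\lfloor m\rfloor+1$. This is visible in the proof of Lemma~\ref{lem3.1}, where every new $r$-shift is of the form $-1$, $-2$, $-3$ etc.\ coming from $v_{(-1)}$, $v_{(-2)}$ and the identities \eqref{1}--\eqref{2}. Beyond this small bookkeeping, the corollary is a routine dimension count.
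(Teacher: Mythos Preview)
Your proof is correct and follows essentially the same approach as the paper: both restrict the inclusion of Lemma~\ref{lem3.1} to the weight-$(h_3+m)$ component, use the grading identity~\eqref{graint} to see that $a_{(k-r)}$ shifts weight by an integer $r$, and conclude that only the $\lfloor m\rfloor+1$ values $l=m-\lfloor m\rfloor,\dots,m$ contribute. Your explicit justification that $r$ is a genuine non-negative integer (hence the fractional-part argument) is a welcome elaboration of a point the paper leaves implicit.
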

\begin{proof}From Lemma \ref{lem3.1} i), we have from the grading that ${\bar{M}^{3}_{h_3+m}}\subset {M^{\text{top}}_1}_{(k)}{\bar{M}^{2}_{h_2+m}}+{M^{\text{top}}_1}_{(k-1)}{\bar{M}^{2}_{h_2+m-1}}+\cdots +{M^{\text{top}}_1}_{(k-\lfloor m \rfloor)}{\bar{M}^{2}_{h_2+m-\lfloor m\rfloor}}$, where $\lfloor \cdot \rfloor$ denote the floor function as usual. Hence, we obtain the Corollary i).  Analogously for Corollary ii).
\end{proof}





Now, we change the grading of $R_{V}$ in \eqref{conical} as follows ${(\tilde{R})_V}_{r_0\Delta} ={(R_V)}_{\Delta}$, then $R_{V}=\bigoplus_{n\geq 0} {(\tilde{R})_V}_{n}$. Analogously, ${(\tilde{{M}})}_{r_0\Delta}={(\bar{M})}_{\Delta+h}$ and $\bar{M}=\bigoplus_{n\geq 0}{(\tilde{{M}})}_{n}$.  Then from Corollary \ref{cor3.2}, we have in particular
\begin{itemize}
\item[j)] $\dim \tilde{M}^3_{n}\leq (\dim  M_1^{\text{top}})(\dim \tilde{M}^2_{0}+\cdots +\dim \tilde{M}^2_{n})\, ,  $\\
\item[jj)] $\dim \tilde{M}^3_{n}\leq (\dim \tilde{M}^1_{0}+\cdots +\dim \tilde{M}^1_{n})(\dim  M_2^{\text{top}})\, .  $
\end{itemize}
Note that the dimensions of $\text{Supp}\bar{M}_1,\text{Supp}\bar{M}_2$ and $\text{Supp}\bar{M}_3 $ are independent of the grading. \\

 Let $R=\bigoplus_{n\geq 0} R_n$ be a a graded commutative ring such that $R_0=\CC$ and $R$ is a finitely generated $\CC$-algebra. For a finitely generated graded module $M=\bigoplus_{n\geq 0} M_n$, the \emph{Hilbert generating function} is $H_{M}(t):=\sum_{n\geq 0}\left(\dim_{\CC} M_n\right) t^n$. Let $v_i\in R_{r_i}$ be finite generators of $R$, from Hilbert-Serre theorem 
 \[H_{M}(t)=\frac{Q(t)}{\Pi_i(1-t^{r_i})}, \]
{where} $Q(t)\in \ZZ[t]$, see \cite{AtMD, AlKl}. Moreover, the order of pole of $H_{M}(t)$ at $t=1$ is given by $\dim \text{Supp} M$, \cite{S}. 
\begin{proposition}\label{pro3.3} Let $V$ be a conical vertex algebra and let $M_1, M_2, M_3$ be ordinary modules. If there is surjective intertwiner of type $\binom{M_3}{M_1\quad M_2}$, then 
\[\dim X_{M_3}\leq \text{min}\{ \dim X_{M_1},  \dim X_{M_2}\}+1  \]

\end{proposition}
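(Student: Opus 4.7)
The plan is to translate the coefficientwise bounds (j) and (jj) of Corollary~\ref{cor3.2} into bounds on the order of the pole at $t=1$ of the Hilbert generating functions of $\tilde M^i$, and then invoke the Hilbert--Serre theorem together with the fact that $\mathrm{ord}_{t=1}H_{\tilde M^i}(t)=\dim X_{M_i}$. Set $h_i(t):=H_{\tilde M^i}(t)=\sum_{n\geq 0}(\dim \tilde M^i_n)\,t^n$. After the regrading each $\tilde M^i$ is $\ZZ_{\geq 0}$-graded and finitely generated over the finitely generated graded $\CC$-algebra $\tilde R_V$ with $(\tilde R_V)_0=\CC$; the Krull dimension of $\mathrm{Supp}\,\tilde M^i$ agrees with $\dim X_{M_i}$, since the regrading by $r_0$ does not alter the ring or module structure and $(1-t^{r_0})=(1-t)(1+t+\cdots+t^{r_0-1})$ has the same order of vanishing at $t=1$ as $(1-t)$ up to a nonvanishing factor.

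Next, I would rewrite (j) as
\[\dim \tilde M^3_n \;\leq\; (\dim M_1^{\mathrm{top}})\sum_{k=0}^n \dim \tilde M^2_k,\]
and recognize the right-hand side as $(\dim M_1^{\mathrm{top}})$ times the $n$-th coefficient of $h_2(t)/(1-t)$. This yields the coefficientwise bounds
\[h_3(t)\;\preceq\;(\dim M_1^{\mathrm{top}})\,\frac{h_2(t)}{1-t},\qquad h_3(t)\;\preceq\;(\dim M_2^{\mathrm{top}})\,\frac{h_1(t)}{1-t},\]
where the second inequality comes symmetrically from (jj). The constants $\dim M_i^{\mathrm{top}}$ are finite because each $M_i$ is ordinary, and multiplication by $1/(1-t)$ raises the pole order at $t=1$ by exactly one; so the two majorants have pole orders $\dim X_{M_2}+1$ and $\dim X_{M_1}+1$ at $t=1$.

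Finally, I would convert the coefficientwise inequalities into a pole-order inequality. Since all series involved have nonnegative coefficients, for real $t\in(0,1)$ one has $0\leq h_3(t)\leq g(t)$ for either majorant $g$. Multiplying by $(1-t)^d$ with $d=\mathrm{ord}_{t=1}g$ shows that $(1-t)^d h_3(t)$ stays bounded as $t\to 1^-$, so $\mathrm{ord}_{t=1}h_3\leq d$. Taking the minimum over the two choices of $g$ yields
\[\dim X_{M_3}\;\leq\;\min\{\dim X_{M_1},\dim X_{M_2}\}+1.\]
The main obstacle is of a conceptual rather than computational nature: one must justify that positivity of all Hilbert coefficients, together with an elementary boundedness argument on $(0,1)$, genuinely controls the pole order at $t=1$, and verify that no subtlety arises from the $r_0$-regrading. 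Once these are granted, the argument is essentially a one-line comparison of rational functions.
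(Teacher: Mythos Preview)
Your argument is correct and follows the paper's proof essentially verbatim: both rewrite the coefficientwise bound (j) as $h_3(t)\preceq(\dim M_1^{\mathrm{top}})\,h_2(t)/(1-t)$, deduce $\mathrm{ord}_{t=1}h_3\leq\mathrm{ord}_{t=1}h_2+1$, and then repeat with (jj). You are simply more explicit than the paper about why a coefficientwise inequality of nonnegative series forces a pole-order inequality at $t=1$ (via boundedness on $(0,1)$ together with rationality from Hilbert--Serre) and about why the $r_0$-regrading is harmless; the paper asserts both of these without comment.
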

\begin{proof} From j) above, we have the following relation (the inequality is coefficient-wise)
\begin{align*}
H_{\bar{M}_3}(t)&=\sum_{i\geq 0} \dim \tilde{M}^3_i t^{i} \leq (\dim  M_1^{\text{top}})\sum_{i\geq 0} (\sum_{j=0}^i\dim \tilde{M}^2_j) t^{i}\\
&=(\dim  M_1^{\text{top}})(\sum_{j\geq 0}\dim \tilde{M}^2_j t^j)\sum_{i\geq 0}  t^{i}=(\dim  M_1^{\text{top}})\frac{1}{1-t}H_{\bar{M}_2}(t)
\end{align*}
Therefore the order of the pole at $t=1$ of $H_{\bar{M}_3}(t)$ is less or equal than the order of the pole at $t=1$ of $\frac{1}{1-t}H_{\bar{M}_2}(t)$. Hence, $\dim X_{M_3}\leq  \dim X_{M_2}+1$. Analogously, using jj) we have $\dim X_{M_3}\leq  \dim X_{M_1}+1$.
\end{proof}

\begin{proposition}\label{pro3.4}
If $V$ is simple conical self-dual and $M$ is ordinary then $\dim X_M =\dim X_V $ or $\dim X_M =\dim X_V -1$.  
\end{proposition}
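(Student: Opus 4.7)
The plan is to use the self-duality hypothesis to produce a surjective intertwiner of type $\binom{V}{M\ M'}$, where $M'$ denotes the contragredient module, and then apply Proposition~\ref{pro3.3} with $M_3=V$.

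To produce the intertwiner, I would invoke the standard construction of \cite{FHL}: for any ordinary $V$-module $M$, the contragredient $M'$ is again ordinary, and the natural pairing $M'\otimes M\to\CC$ together with the adjoint vertex operator construction yields a canonical non-zero graded intertwiner of type $\binom{V'}{M\ M'}$. Self-duality supplies an isomorphism $V'\cong V$ of $V$-modules; composing yields a non-zero graded intertwiner $\Y$ of type $\binom{V}{M\ M'}$. To see $\Y$ is surjective, note that the linear span $W\subset V$ of modes $a_{(n)}b$ with $a\in M$, $b\in M'$, $n\in\CC$ is a $V$-submodule: this is immediate from the intertwining identity~\eqref{b}, equivalently~\eqref{1} in the intertwiner setting, which expresses $v_{(m)}(a_{(n)}b)$ as a sum of modes of $\Y$. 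Since $W\neq 0$ and $V$ is simple, $W=V$, so $\Y$ is surjective.

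Proposition~\ref{pro3.3} applied to $(M_1,M_2,M_3)=(M,M',V)$ then gives
\[\dim X_V\leq\min\{\dim X_M,\dim X_{M'}\}+1\leq\dim X_M+1.\]
On the other hand, $\bar M$ is an $R_V$-module by Theorem~\ref{thm2.9}, so $X_M=\mathrm{supp}_{R_V}(\bar M)\subset\mathrm{specm}(R_V)=X_V$ and hence $\dim X_M\leq\dim X_V$. The sandwich $\dim X_V-1\leq\dim X_M\leq\dim X_V$ gives the two possibilities in the conclusion.

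The main obstacle I anticipate is the first step: producing a \emph{graded} intertwiner of type $\binom{V}{M\ M'}$ compatible with~\eqref{graint} and with the correct top weight, so that Proposition~\ref{pro3.3} applies verbatim. The classical adjoint intertwiner of \cite{FHL} involves operators such as $(-z^{-2})^{L_0}e^{zT}$, and one has to check that these combine cleanly with the isomorphism $V\cong V'$ to yield a graded intertwiner of the right shape. Once this is in place, surjectivity and the final dimension sandwich are essentially formal.
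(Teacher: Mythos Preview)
Your proposal is correct and follows essentially the same route as the paper: produce a nonzero intertwiner of type $\binom{V}{M\ M'}$ via the \cite{FHL} symmetries together with self-duality $V\cong V'$, observe it is surjective because $V$ is simple, apply Proposition~\ref{pro3.3}, and combine with $X_M\subset X_V$. The only cosmetic difference is that the paper phrases the first step as the isomorphism of intertwiner spaces $\binom{M}{V\ M}\cong\binom{V}{M\ M'}$ (starting from the canonical $Y^M$), while you invoke the adjoint intertwiner $\binom{V'}{M\ M'}$ directly; these are two ways of naming the same \cite{FHL} symmetry, and your concern about gradedness is handled by the standing assumption at the start of the section that all intertwiners are graded.
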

\begin{proof} We have that $\binom{M}{V \quad M}\cong \binom{V}{M \quad M'}$ where $M'$ is the contragredient, in particular $M'$ is simple ordinary if $M$ is simple ordinary. Moreover, a non-trivial intertwiner of type $\binom{V}{M \quad M'}$ must be surjective, because $V$ is simple. Then $\dim X_V \leq \dim X_{M}+1$.  Finally, $X_{M}\subset X_V$ hence we have two possibilities $\dim X_M =\dim X_V $ or $\dim X_M =\dim X_V -1$.  
\end{proof}


Now, $V$ is quasi-lisse if it is conformal and $X_V$ has finitely simpletic leaves. If $M$ is ordinary then $\text{Supp}\bar{M}=V(Ann_{R_{V}}\bar{M})$ and $Ann_{R_{V}}\bar{M}$ is a Poisson ideal, hence only even dimensional supports are allowed. We obtain the following theorem

\begin{theorem}\label{thm3.5} Let $V$ be a conical simple quasi-lisse vertex algebra and $M$ a simple ordinary module
\begin{itemize}
\item[i] If V is self-dual then $\dim X_{M}=\dim X_V$.
\item[ii] If V is self-dual and $X_V$ is irreducible then $X_V=X_M$.
\end{itemize}
\end{theorem}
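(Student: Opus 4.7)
The plan is to combine Proposition~\ref{pro3.4} with the parity constraint coming from the quasi-lisse/Poisson structure that is sketched in the paragraph preceding the theorem statement.

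For part (i), I would start by invoking Proposition~\ref{pro3.4}, which gives the dichotomy $\dim X_M=\dim X_V$ or $\dim X_M=\dim X_V-1$; the goal is then to rule out the second possibility. The key observation is that both numbers must be even. Indeed, since $V$ is quasi-lisse, $X_V$ is a finite union of symplectic leaves, each of even dimension, and every irreducible component of $X_V$ is the closure of its unique open dense symplectic leaf, hence has even dimension. Taking the maximum gives $\dim X_V\in 2\ZZ$. For $X_M$, one uses that $\mathrm{Ann}_{R_V}(\bar M)$ is a Poisson ideal of $R_V$, so $X_M=V(\mathrm{Ann}_{R_V}\bar M)\subset X_V$ is a Poisson closed subvariety and therefore a union of symplectic leaves of $X_V$, necessarily finitely many and all even-dimensional. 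The same argument applied to $X_M$ then gives $\dim X_M\in 2\ZZ$. Since two even integers cannot differ by $1$, the option $\dim X_M=\dim X_V-1$ is excluded and (i) follows.

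For part (ii), with (i) in hand, $X_M$ is a closed subvariety of $X_V$ of the same dimension. If $X_V$ is irreducible, any such subvariety must equal $X_V$, giving $X_M=X_V$.

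The main conceptual point, and the only nontrivial step, is the evenness argument: one needs the fact that a Poisson subvariety of a variety with finitely many symplectic leaves is again a finite union of symplectic leaves of the ambient space, together with the fact that symplectic leaves are even-dimensional. Everything else is a direct application of the results already established in the paper.
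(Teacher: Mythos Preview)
Your proposal is correct and follows exactly the line of argument the paper sketches in the paragraph immediately preceding the theorem: combine the dichotomy of Proposition~\ref{pro3.4} with the observation that $\mathrm{Ann}_{R_V}(\bar M)$ is a Poisson ideal, so that $X_M$ is a Poisson closed subvariety of $X_V$, hence a finite union of symplectic leaves and therefore even-dimensional, which rules out the $\dim X_M=\dim X_V-1$ case. Your write-up simply makes explicit the details the paper leaves implicit, and your treatment of part~(ii) is the obvious one.
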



\subsection*{Acknowledgments}
Part of this work was presented at two conferences: ``Vertex Algebras and Related Topics" at Ningbo University and ``Geometry, Integrability, and Symmetry" at the University of Denver. I am very grateful to the organizers of both events. I am also very grateful to Tomoyuki Arakawa for insightful discussions. This research was conducted at the University of Colorado Boulder, and I am very grateful for the excellent working conditions provided.
\bibliographystyle{amsalpha}

\end{document}